\documentclass[review,onefignum,onetabnum]{siamart171218}



\usepackage{lipsum}
\usepackage{amsfonts}
\usepackage{graphicx}
\usepackage{epstopdf}
\usepackage{algorithmic}
\ifpdf
  \DeclareGraphicsExtensions{.eps,.pdf,.png,.jpg}
\else
  \DeclareGraphicsExtensions{.eps}
\fi


\newsiamremark{remark}{Remark}
\newsiamremark{hypothesis}{Hypothesis}
\crefname{hypothesis}{Hypothesis}{Hypotheses}
\newsiamthm{claim}{Claim}

\headers{Adaptive Step Selection for a Filtered Implicit Method}{S. M. McGovern}

\title{Adaptive Step Selection for a Filtered Implicit Method}

\author{Stephen M. McGovern\thanks{University of Pittsburgh, Department of Mathematics (\email{stm114@pitt.edu}})
}
\usepackage{amsopn}

\makeatletter
\newcommand*{\addFileDependency}[1]{
  \typeout{(#1)}
  \@addtofilelist{#1}
  \IfFileExists{#1}{}{\typeout{No file #1.}}
}
\makeatother

\newcommand*{\myexternaldocument}[1]{%
    \externaldocument{#1}%
    \addFileDependency{#1.tex}%
    \addFileDependency{#1.aux}%
}

\ifpdf
\hypersetup{
  pdftitle={Adaptive Step Selection for a Filtered Implicit Method},
  pdfauthor={S. M. McGovern}
}
\fi


\myexternaldocument{ex_supplement}

\newtheorem{prop}{Proposition}
\begin{document}
\nolinenumbers
\maketitle

\begin{abstract}
    Pre-filtering and post-filtering steps can be added to many of the traditional numerical methods to generate new, higher order methods with strong stability properties. Presented in this paper are a variable step pre-filter and post-filter that allow adaptive time stepping for a filtered method based on Implicit Euler from \cite{decaria:2022}.
\end{abstract}

\begin{keywords}
  filtered methods, filters, adaptivity, adaptive step, variable step
\end{keywords}

\begin{AMS}
  65L05
\end{AMS}

\section{Introduction}
\label{sec:intro}
To address needs in industry and research, enhanced numerical methods can be generated by the use of filters \cite{ditkowski:2015}. They can be applied before and after a ``solve step'' in many of the traditional numerical methods to create new, higher order methods \cite{ditkowski:2015}. The filtering steps tend to be only a few lines of code each, which is advantageous from an implementation standpoint. Additionally, the new filtered methods have favorable accuracy and stability properties despite the relative simplicity of the code changes. Ideas of this nature are presented in 
\cite{decaria:2022} \cite{ditkowski:2015} \cite{ditkowski:2017} \cite{guzel:2018}.

Complex codes based on implicit methods in particular are common. They appear frequently in modern and legacy code bases. It has been shown that their accuracy can be increased through the use of filters while maintaining stability \cite{decaria:2022}. The focus of this paper is adaptive step selection for two novel filtered methods based on Implicit Euler from \cite{decaria:2022} that were presented with a constant step. The two methods are a natural embedded pair which makes them amenable to adaptive stepping. However, no variable step extension of the methods exists prior to this paper. We develop the variable step extension herein.

For reference, the first constant step method from \cite{decaria:2022} is a second order Implicit Euler-based method with a pre-filter. The other constant step method is a third order Implicit Euler-based method with the same pre-filter, and then a post-filter added after the Implicit Euler solve. The constant step methods read as follows:
\begin{align*}
    &\text{Step 1) Pre-filter} && \Tilde{y}_n = y_n - \frac{1}{2}(y_n - 2y_{n-1} + y_{n-2}) ,\\[2ex]
    &\text{Step 2) IE Solve} && \frac{y_{n+1} - \Tilde{y}_n}{k} = f(t_{n+1}, y_{n+1}) ,\\[2ex]
    &\text{Step 3) Post-filter} && y_{n+1}^{3rd} = y_{n+1} - \frac{5}{11}(y_{n+1} - 3y_n + 3y_{n-1} - y_{n-2}).
\end{align*}

Completing Step 1) and Step 2) above yields the second order method. We will refer to it as IE-Pre-2. It is A-stable and L-stable \cite{decaria:2022}. Once Step 3) is added for post-filtering, then completing all three steps above yields the third order method. This third order method will be referred to as IE-Pre-Post-3. A-stability is lost after adding the post-filter step. However, IE-Pre-Post-3 is still A($\alpha$) stable with $\alpha \approx 71.51^{\circ}$ (Section 4.1.3 of \cite{decaria:2022}). 

The higher order method shares the same first two steps with the lower order method. This fact makes the two methods a natural embedded pair. The variable step extension derived in \Cref{sec:main} is
\begin{align*}
    &\text{Step 1) Pre-filter} && \Tilde{y}_n = y_n 
    \\[2ex]& &&- \frac{\alpha_n}{2} \left(\frac{2k_{n-2}}{k_{n-1}+k_{n-2}}y_{n} - 2y_{n-1} + \frac{2k_{n-1}}{k_{n-1}+k_{n-2}}y_{n-2} \right),\\[2ex]
    &\text{Step 2) IE Solve} && \frac{y_{n+1}-\Tilde{y}_n}{k_n} = f(t_{n+1}, y_{n+1}),\\[2ex]
    &\text{Step 3) Post-filter} && y^{3rd}_{n+1} = y_{n+1}
    \\[2ex]& &&- \beta_n\left(\frac{2k_{n-1}}{k_n+k_{n-1}}y_{n+1} - 2y_n + \frac{2k_n}{k_n+k_{n-1}}y_{n-1}\right.
    \\[2ex]& &&- \left.\left(\frac{2k_{n-2}}{k_{n-1}+k_{n-2}}y_{n} - 2y_{n-1} + \frac{2k_{n-1}}{k_{n-1}+k_{n-2}}y_{n-2}\right)\right).
\end{align*}
Above, $\alpha_n$ and $\beta_n$ are the pre-filter and post-filter coefficients, respectively. It is shown how to choose these filters for the requisite orders in \Cref{sec:main}. The variables $k_n$, $k_{n-1}$ and $k_{n-2}$ are steps between values of $t$.

Denote the value computed from Step 1) and Step 2) of the method as $y^{2nd}_{n+1}$ and denote the value computed from Step 1), Step 2) and Step 3) of the complete method as $y^{3rd}_{n+1}$. Then we can use
\begin{align*}
    EST = |y^{3rd}_{n+1} - y_{n+1}^{2nd}|
\end{align*}
as an estimator of the error at Step 2).

We use this error estimator, along with techniques to minimize discrete curvature from \cite{guzel:2018}, to derive the filters and construct the variable time step method above. The new method we call Filtered-IE23. This adaptive numerical method is implemented with the simplest controller, halving and doubling, since the focus is on the estimator. The paper is organized as follows. Background is presented in \Cref{sec:background}, the main results concerning the derivation of the new adaptive pre-filter and post-filter are in \Cref{sec:main}, the algorithm for a halving and doubling scheme is given in \Cref{sec:alg}, numerical results are in \Cref{sec:numerics}, and the conclusions follow in \Cref{sec:conclusions}.

\section{Background}
\label{sec:background}

We follow the definition of discrete curvature presented in \cite{guzel:2018}, pages 7-8. Briefly, we find the Lagrange interpolating polynomial for $y_{n+1}$, $y_n$ and $y_{n-1}$. Call this quadratic interpolant $\phi(t)$. We have
\begin{align*}
    \phi (t) &= y_{n+1} \ell_{n+1}(t) + y_{n} \ell_{n}(t) + y_{n-1} \ell_{n-1}(t)\\[2ex]
    &= y_{n+1} \frac{(t-t_{n-1})(t-t_n)}{(t_{n+1}-t_{n-1})(t_{n+1}-t_n)} 
    + y_{n} \frac{(t-t_{n-1})(t-t_{n+1})}{(t_{n}-t_{n-1})(t_{n}-t_{n+1})} \\[2ex]
    &+ y_{n-1} \frac{(t-t_n)(t-t_{n+1})}{(t_{n-1}-t_n)(t_{n-1}-t_{n+1})}.
\end{align*}
Differentiate twice with respect to $t$ to find the discrete second difference, which is
\begin{align*}
    \phi''(t) &= \frac{2}{(t_{n+1}-t_{n-1})(t_{n+1}-t_n)} y_{n+1} 
    + \frac{2}{(t_{n}-t_{n-1})(t_{n}-t_{n+1})} y_{n} \\[2ex]
    &+\frac{2}{(t_{n-1}-t_n)(t_{n-1}-t_{n+1})} y_{n-1}.
\end{align*}
Denote $k_n$ as the time step between $t_n$ and $t_{n+1}$ and denote $k_{n-1}$ as the time step from $t_{n-1}$ to $t_n$. Then we have
\begin{align*}
    \phi''(t) &= \frac{2}{(k_{n-1}+k_n)k_n} y_{n+1} 
    + \frac{2}{k_{n-1}(-k_n)} y_{n} + \frac{2}{(-k_{n-1})(-(k_n+k_{n-1}))} y_{n-1}.
\end{align*}
Curvature $\kappa_n$, as defined in \cite{guzel:2018}, is then $\phi''(t)$ scaled by $k_{n-1}k_n$. So
\begin{align*}
    \kappa_n &= k_{n-1}k_n\phi'' = \frac{2k_{n-1}}{k_n+k_{n-1}}y_{n+1} - 2y_n + \frac{2k_n}{k_n+k_{n-1}}y_{n-1}.
\end{align*}
Similar ideas are presented in \cite{kalnay:2003} \cite{williams:2011}.

The underlying idea for both the constant and variable step method is
\begin{align*}
    &\text{Step 1) Pre-filter} && \Tilde{y}_n = y_n - \frac{\alpha_n}{2}\kappa_{n-1}, \\[2ex]
    &\text{Step 2) IE Solve} && \frac{y_{n+1}-\Tilde{y}_n}{k_n} = f(t_{n+1}, y_{n+1}),\\[2ex]
    &\text{Step 3) Post-filter} && y^{3rd}_{n+1} = y_{n+1} - \beta_n(\kappa_{n} - \kappa_{n-1}).
\end{align*}
In both cases, $\alpha_n$ and $\beta_n$ are chosen accordingly for 2nd and 3rd order. The pre-filter is the Robert-Asselin filter \cite{rasselin:1972} \cite{robert:1969}. The post-filter is a higher-order Robert-Asselin-type time filter, presented in \cite{trenchea}. Note that in the constant step case, we have $k_n = k_{n-1} = k_{n-2} = k$ and thus the curvatures at steps $n-1$ and $n$ simplify to
\begin{align*}
    \kappa_{n-1} &= y_n - 2y_{n-1} + y_{n-2}\,\,\,\text{and}\,\,\,\kappa_n = y_{n+1} - 2y_n + y_{n-1}.
\end{align*}
Then $\kappa_{n-1}$ scaled by the pre-filter coefficient is found in Step 1) of the constant step method in ~\Cref{sec:intro}. Likewise,
\begin{align*}
    \kappa_n - \kappa_{n-1} = y_{n+1} - 3y_n +3y_{n-1} - y_{n-2}
\end{align*}
is scaled by the post-filter coefficient in Step 3) of the constant step method.

\section{Main Results}
\label{sec:main}

Now that the background is in place, we show how to derive the adaptive pre-filter and post-filter for the desired orders. We use direct Taylor expansions to determine the pre-filter $\alpha_n$ for second order and an alternative method to find the post-filter $\beta_n$ for third order.

\subsection{Adaptive Step Pre-filter}
Consider the first two steps of adaptive method Filtered-IE23 from \Cref{sec:intro}.
We must choose $\alpha_n$ so that Step 1) and Step 2) yield a second order method.
\begin{prop}The value for the pre-filter $\alpha_n$ which gives a second order adaptive method after completing Step 1) and Step 2) is
\begin{align*}
    \alpha_n = \frac{k_n^2}{k_{n-1}k_{n-2}}.
\end{align*}
\end{prop}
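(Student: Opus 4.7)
The plan is to combine Steps 1) and 2) into a single update, plug the exact solution into that update, and Taylor-expand about $t_n$ to read off the local truncation error (LTE). Since Filtered-IE23 is a one-step-on-$y_{n+1}$ method whose LTE must be $O(k^3)$ for second-order consistency, the value of $\alpha_n$ will be forced by killing the $y''(t_n)$ term in the expansion.

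Eliminating $\tilde y_n$ gives the single equation
\[
y_{n+1} \;=\; y_n \;-\; \tfrac{\alpha_n}{2}\kappa_{n-1} \;+\; k_n f(t_{n+1},y_{n+1}),
\]
so, replacing each $y_j$ by $y(t_j)$ and using $f(t_{n+1},y(t_{n+1})) = y'(t_{n+1})$ when $y$ solves the ODE, the LTE is
\[
\tau_n \;=\; y(t_{n+1}) - y(t_n) \;+\; \tfrac{\alpha_n}{2}\kappa_{n-1} \;-\; k_n y'(t_{n+1}),
\]
where here $\kappa_{n-1}$ is computed from exact values $y(t_n), y(t_{n-1}), y(t_{n-2})$. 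By the discrete-curvature construction recalled in \Cref{sec:background}, the three coefficients appearing in $\kappa_{n-1}$ were designed precisely so that, when applied to $1$ and to $(t_j - t_n)$ respectively, they sum to zero; carrying that computation one order further shows that applied to $\tfrac12(t_j - t_n)^2$ they produce $k_{n-1}k_{n-2}$. Hence
\[
\kappa_{n-1} \;=\; k_{n-1}k_{n-2}\, y''(t_n) \;+\; O(k^3),
\]
with the $O(k^3)$ remainder collecting the $y'''$ terms.

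Independently, the Taylor expansions of $y(t_{n+1})$ about $t_n$ and of $y'(t_{n+1})$ about $t_n$ give
\[
y(t_{n+1}) - y(t_n) - k_n y'(t_{n+1}) \;=\; -\tfrac{k_n^2}{2}\,y''(t_n) \;+\; O(k^3),
\]
exactly the usual (first-order) implicit Euler residual. Adding the two contributions yields
\[
\tau_n \;=\; \left(\tfrac{\alpha_n}{2}k_{n-1}k_{n-2} \;-\; \tfrac{k_n^2}{2}\right) y''(t_n) \;+\; O(k^3),
\]
and the unique choice that makes $\tau_n = O(k^3)$ for arbitrary smooth $y$ is $\alpha_n = k_n^2/(k_{n-1}k_{n-2})$, as claimed.

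The only mildly delicate step is checking that the weights $2k_{n-2}/(k_{n-1}+k_{n-2})$, $-2$, $2k_{n-1}/(k_{n-1}+k_{n-2})$ in $\kappa_{n-1}$ really do annihilate the constant and $y'(t_n)$ contributions and contribute $k_{n-1}k_{n-2}$ to the $y''(t_n)$ coefficient for \emph{non-uniform} steps; I would verify this by directly summing coefficients of $(t_j - t_n)^p$ for $p=0,1,2$ across $j=n,n-1,n-2$ rather than appealing to the Lagrange-interpolant interpretation abstractly. Everything else is routine algebra on two Taylor expansions.
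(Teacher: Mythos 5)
Your proof is correct and arrives at the same $\alpha_n$, but by a genuinely different decomposition than the paper's. The paper substitutes the pre-filter into the IE solve to form the equivalent linear multistep method, Taylor expands each of $y_n$, $y_{n-1}$, $y_{n-2}$ about $t_{n+1}$, and sums the coefficients of $y_{n+1}$, $y'$, and $y''$ separately; the first two sums vanish identically and the $y''$ sum, after substantial algebra on the products of the $k_i$, forces $\alpha_n = k_n^2/(k_{n-1}k_{n-2})$. You instead keep $\kappa_{n-1}$ intact as a scaled second divided difference and use its moment properties: the weights annihilate constants and the $(t_j-t_n)$ terms and return $k_{n-1}k_{n-2}$ when applied to $\tfrac12(t_j-t_n)^2$, so $\kappa_{n-1}=k_{n-1}k_{n-2}\,y''(t_n)+O(k^3)$, while the bare implicit Euler residual contributes $-\tfrac{k_n^2}{2}y''(t_n)+O(k^3)$. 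I checked the three moment sums you flag as the delicate step --- they do hold for nonuniform steps (the quadratic moment is $-k_{n-1}^2 + k_{n-1}(k_{n-1}+k_{n-2}) = k_{n-1}k_{n-2}$) --- so your cancellation condition $\tfrac{\alpha_n}{2}k_{n-1}k_{n-2}=\tfrac{k_n^2}{2}$ is exactly right. What your route buys is brevity and an interpretation the paper's calculation obscures: the pre-filter is precisely the correction that cancels implicit Euler's leading $-\tfrac{k_n^2}{2}y''$ error, which is why $\alpha_n$ is the ratio of $k_n^2$ to the curvature's natural scale $k_{n-1}k_{n-2}$. What the paper's multistep expansion buys is independence from the divided-difference interpretation, an explicit (rather than by-design) verification of the zeroth- and first-order conditions, and a template that parallels the later post-filter derivation. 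To make your version fully self-contained, write out the three one-line moment computations you describe; with those included, nothing is missing.
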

\begin{proof}
The strategy is to construct the equivalent multistep method, Taylor expand around each of the terms and choose $\alpha_n$ accordingly. Consider Step 1) and Step 2):
\begin{align*}
    &\Tilde{y}_n = y_n - \frac{\alpha_n}{2}\left(\frac{2k_{n-2}}{k_{n-1}+k_{n-2}}y_{n} - 2y_{n-1} + \frac{2k_{n-1}}{k_{n-1}+k_{n-2}}y_{n-2}\right) ,\\[2ex]
    &\frac{y_{n+1}-\Tilde{y}_n}{k_n} = f(t_{n+1}, y_{n+1}).
\end{align*}
Substitute the pre-filtered value from Step 1) into the Implicit Euler solve in Step 2), gather terms and simplify:
\begin{align*}
    &y_{n+1} - \left( y_n - \frac{\alpha_n}{2}\left(\frac{2k_{n-2}}{k_{n-1}+k_{n-2}}y_{n} - 2y_{n-1} + \frac{2k_{n-1}}{k_{n-1}+k_{n-2}}y_{n-2}\right) \right) \\[2ex]&= k_n f(t_{n+1},y_{n+1}),\,\,\,\text{or}\\[2ex]
    &y_{n+1} + \frac{(\alpha_n - 1)k_{n-2} - k_{n-1}}{k_{n-1}+k_{n-2}}y_{n} - \alpha_n y_{n-1} + \frac{\alpha_n k_{n-1}}{k_{n-1}+k_{n-2}}y_{n-2} - k_n (y')_{n+1} \\[2ex]&= 0.
\end{align*}
Taylor expand around each of the terms:
\begin{align*}
    y_{n+1} &= y_{n+1},
\\[2ex] 
    \frac{(\alpha_n - 1)k_{n-2} - k_{n-1}}{k_{n-1}+k_{n-2}} y_n &= 
       \frac{(\alpha_n - 1)k_{n-2} - k_{n-1}}{k_{n-1}+k_{n-2}} y_{n+1}
       - \frac{(\alpha_n - 1)k_{n-2} - k_{n-1}}{k_{n-1}+k_{n-2}}k_n y' 
       \\[2ex]   &+ \frac{((\alpha_n - 1)k_{n-2} - k_{n-1})}{k_{n-1}+k_{n-2}}\frac{k_n^2}{2}y'' +\mathcal{O}(k_n^3),
\\[2ex]  
    -\alpha_n y_{n-1} &= -\alpha_n y_{n+1} + \alpha_n (k_n + k_{n-1})y' 
\\[2ex]    
    &- \alpha_n \frac{(k_n + k_{n-1})^2}{2}y'' +\mathcal{O}((k_n+k_{n-1})^3),
\\[2ex]  
    \frac{\alpha_n k_{n-1}}{k_{n-1}+k_{n-2}} y_{n-2} &=
        \frac{\alpha_n k_{n-1}}{k_{n-1}+k_{n-2}} y_{n+1} 
        - \frac{\alpha_n k_{n-1}}{k_{n-1}+k_{n-2}} (k_n + k_{n-1} + k_{n-2})y' 
        \\[2ex]  &+ \frac{\alpha_n k_{n-1}}{k_{n-1}+k_{n-2}} \frac{(k_n + k_{n-1} + k_{n-2})^2}{2} y'' 
        \\[2ex] &+\mathcal{O}((k_n + k_{n-1} + k_{n-2})^3),
\\[2ex]  -k_n (y')_{n+1} &=  -k_n y'.
\end{align*}
For a second order method, $\alpha_n$ must be chosen so that the $y_{n+1}$ , $y'$ and $y''$ terms sum to zero. Sum the coefficients on the $y_{n+1}$ terms and we have:
\begin{align*}
    1 + \frac{(\alpha_n - 1)k_{n-2} - k_{n-1}}{k_{n-1}+k_{n-2}} + (-\alpha_n) + \frac{\alpha_n k_{n-1}}{k_{n-1}+k_{n-2}} = 0.
\end{align*}
This is zero regardless of the choice of $\alpha_n$. Similarly, sum the coefficients on the $y'$ terms:
\begin{align*}
    &- \frac{(\alpha_n - 1)k_{n-2} - k_{n-1}}{k_{n-1}+k_{n-2}}k_n + \alpha_n (k_n + k_{n-1}) 
    \\[2ex]&-\frac{\alpha_n k_{n-1}}{k_{n-1}+k_{n-2}} (k_n + k_{n-1} + k_{n-2}) - k_n  = 0.
\end{align*}
This is also trivially zero. Finally, sum the coefficients on the $y''$ terms:
\begin{align*}
    &\frac{((\alpha_n - 1)k_{n-2} - k_{n-1})}{k_{n-1}+k_{n-2}}\frac{k_n^2}{2}
    + -\alpha_n \frac{(k_n + k_{n-1})^2}{2}
    \\[2ex]&+ \frac{\alpha_n k_{n-1}}{k_{n-1}+k_{n-2}} \frac{(k_n + k_{n-1} + k_{n-2})^2}{2}
    \\[2ex]
    &= \frac{\alpha_n k_n^2 k_{n-2} - k_n^2 k_{n-2} - k_n^2 k_{n-1}}{2(k_{n-1}+k_{n-2})}
    + \frac{-\alpha_n k_n^2 k_{n-1} - \alpha_n k_n^2 k_{n-2}^2 -2 \alpha_n k_n k_{n-1}^2}{2(k_{n-1}+k_{n-2})}
    \\[2ex]&+ \frac{- 2 \alpha_n k_n k_{n-1} k_{n-2} - \alpha_n k_{n-1}^3 - \alpha_n k_{n-1}^2 k_{n-2} +\alpha_n k_n^2 k_{n-1} + 2\alpha_n k_n k_{n-1}^2}{2(k_{n-1}+k_{n-2})}
    \\[2ex]&+ \frac{2 \alpha_n k_n k_{n-1} k_{n-2} + \alpha_n k_{n-1}^3 + 2\alpha_n k_{n-1}^2 k_{n-2} + \alpha_n k_{n-1} k_{n-2}^2}{2(k_{n-1}+k_{n-2})}.
\end{align*}
We require this to sum to zero:
\begin{align*}
    0&=-k_n^2 k_{n-2} - k_n^2 k_{n-1} - \alpha_n k_{n-1}^2 k_{n-2} + 2\alpha_n k_{n-1}^2 k_{n-2} + \alpha_n k_{n-1} k_{n-2}^2 \\[2ex]
    &=-k_n^2 k_{n-2} - k_n^2 k_{n-1} + \alpha_n k_{n-1}^2 k_{n-2} + \alpha_n k_{n-1} k_{n-2}^2 \\[2ex]&\Longleftrightarrow
    \alpha_n k_{n-1}k_{n-2} (k_{n-1} + k_{n-2}) = k_n^2 ( k_{n-1} + k_{n-2} )
    \\[2ex]&\Longleftrightarrow \alpha_n = \frac{k_n^2}{k_{n-1}k_{n-2}}.
\end{align*}
\end{proof}
With this choice of $\alpha_n$, the local truncation error is $\mathcal{O}((k_n + k_{n-1} + k_{n-2})^3)$ and so the method is second order \cite{ascher:1998} \cite{Griffiths:2010}. Note that in the constant step case, i.e. when $k_n = k_{n-1} = k_{n-2}$, $\alpha_n$ reduces to 1. This is consistent with the pre-filter coefficient in Step 1) of the constant step method given in ~\Cref{sec:intro}.

\subsection{Adaptive Step Post-filter}
Now that $\alpha_n$ has been found, the variable step method so far is:
\begin{align*}
    &\text{Step 1) Pre-filter} && \Tilde{y}_n = y_n 
    \\[2ex]& &&- \frac{k^2_n}{2k_{n-2}k_{n-1}} \left(\frac{2k_{n-2}}{k_{n-1}+k_{n-2}}y_{n} - 2y_{n-1}\right.\\[2ex] 
    & &&+ \left. \frac{2k_{n-1}}{k_{n-1}+k_{n-2}}y_{n-2} \right),\\[2ex]
    &\text{Step 2) IE Solve} && \frac{y_{n+1}-\Tilde{y}_n}{k_n} = f(t_{n+1}, y_{n+1}),\\[2ex]
    &\text{Step 3) Post-filter} && y^{3rd}_{n+1} = y_{n+1} - \beta_n(\kappa_{n} - \kappa_{n-1}).
\end{align*}
Given $\alpha_n$, we must choose the post-filter $\beta_n$ so the complete, adaptive step method is third order.
\begin{prop}
    The value for the post-filter $\beta_n$ which gives a third order adaptive method is
\begin{align*}
    \beta_n &= \frac{\beta_{n1}}{\beta_{n2}},\,\text{where}\\[2ex]
    \beta_{n1} &= -k_n^2 \left(k_{n-1}+k_n\right) \left(k_{n-2}+2 \left(k_{n-1}+k_n\right)\right),
    \\[2ex]
    \beta_{n2} &= 2 k_{n-1} (2 \left(k_{n-1}+k_n\right) k_{n-2}^2+\left(k_{n-1}^2-5 k_n k_{n-1}-7 k_n^2\right) k_{n-2}\\[2ex]
    &+3 k_{n-3} \left(k_{n-2}-k_n\right) \left(k_{n-1}+k_n\right)-2 k_{n-1} k_n \left(k_{n-1}+k_n\right)).
\end{align*}
\end{prop}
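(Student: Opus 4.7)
The plan is to follow Proposition~1's template, carrying the Taylor analysis one order higher. First, substitute Step~1 into Step~2 and use Step~3 in the form $y_{n+1}=y^{3rd}_{n+1}+\beta_n(\kappa_n-\kappa_{n-1})$ to fold the entire method into a single multistep-like identity for $y^{3rd}_{n+1}$, coupling the pre-filter correction $\frac{\alpha_n}{2}\kappa_{n-1}$ and the post-filter correction $\beta_n(\kappa_n-\kappa_{n-1})$ with the implicit term $k_n f(t_{n+1},y_{n+1})$.

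Next, Taylor expand each $y_i$ about $t_{n+1}$ through the $y'''$ term. By Proposition~1 the choice $\alpha_n=k_n^2/(k_{n-1}k_{n-2})$ already cancels the $y$, $y'$, and $y''$ contributions coming from Steps~1 and~2. The post-filter injects new $\beta_n$-scaled terms at every derivative order, and requiring the $y'''$-level coefficient of the combined equation to vanish produces a single linear equation in $\beta_n$. Solving it algebraically yields the stated ratio $\beta_{n1}/\beta_{n2}$.

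The main obstacle is the volume of algebra rather than any conceptual hurdle. Expansions of $\kappa_n$ and $\kappa_{n-1}$ through $y'''$, combined with the residual of Steps~1 and~2 and the implicit coupling induced by $\kappa_n$ depending on $y^{2nd}_{n+1}$, produce a long polynomial expression in the step sizes. The author's use of ``an alternative method'' in place of direct Taylor expansion likely exploits the embedded-pair identity $y^{3rd}_{n+1}-y^{2nd}_{n+1}=-\beta_n(\kappa_n-\kappa_{n-1})$ together with the Proposition~1 truncation error to shortcut the calculation; the $k_{n-3}$ dependence appearing in $\beta_{n2}$ suggests the derivation also invokes structure from the preceding step, such as the previous pre-filter coefficient $\alpha_{n-1}=k_{n-1}^2/(k_{n-2}k_{n-3})$, which naturally introduces $k_{n-3}$ into the third-order balance.
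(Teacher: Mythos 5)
Your plan diverges from the paper's proof in a way that matters. The paper does not carry the Taylor/order-condition analysis of Proposition~1 one order higher; it instead imposes exactness on the single monomial $y=t^3$ at concrete nodes $t_1=k_0$, $t_2=k_0+k_1$, $t_3=k_0+k_1+k_2$, $t_4=k_0+k_1+k_2+k_3$ (taking $n=3$), inverts the post-filter relation to express the unfiltered value $y_4^*$ in terms of the exact cubic values, substitutes into the pre-filtered Implicit Euler identity $y_4^*-\tilde y_3 = k_3\,(y_4^*)'$, and solves the resulting single linear equation for $\beta$, finally relabeling $k_j\mapsto k_{n-3+j}$. Your route, as stated, would not reproduce the stated formula, for two concrete reasons. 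First, the premise that only the $y'''$ coefficient needs attention is false: the post-filter's leading residual on the exact solution is $-\beta_n(\kappa_n-\kappa_{n-1})\approx -\beta_n k_{n-1}(k_n-k_{n-2})\,y''$, which does not vanish for general variable steps, so Step~3 reintroduces a nonzero $y''$-level term that cannot be cancelled simultaneously with the $y'''$-level term using the single parameter $\beta_n$. Second, zeroing only the $y'''$ coefficient is equivalent to exactness on $(t-t_{n+1})^3$, a translation-invariant condition involving only $k_n$, $k_{n-1}$, $k_{n-2}$; no such equation can produce the $k_{n-3}$ dependence in $\beta_{n2}$.

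Your conjecture that $k_{n-3}$ enters through the previous pre-filter coefficient $\alpha_{n-1}$ is also not what happens. In the paper, $k_{n-3}$ appears because exactness on the monomial $t^3$ is not translation-invariant: writing $t^3=(t-t_{n+1})^3+3t_{n+1}(t-t_{n+1})^2+\cdots$ with the time origin placed so that $t_{n+1}=k_{n-3}+k_{n-2}+k_{n-1}+k_n$ couples the nonvanishing quadratic residual above to the absolute time $t_{n+1}$; this is exactly the term $3k_{n-3}\left(k_{n-2}-k_n\right)\left(k_{n-1}+k_n\right)$ in $\beta_{n2}$, which carries the telltale factor $k_{n-2}-k_n$ and disappears when $k_n=k_{n-2}$. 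To align your argument with the stated result you would have to replace ``zero the $y'''$ coefficient'' by the paper's weaker, origin-dependent condition ``the residual on $y=t^3$ at these nodes vanishes,'' accepting that the $y''$-level residual is nonzero and is instead cancelled against part of the cubic residual at the specific nodes used.
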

\begin{proof}
To find $\beta_n$, we use an alternative method to the Taylor expansions used in the previous subsection. We require the method to be exact on $y = t^3$. For simplicity and space considerations, use $n=3$. Then we'll work with $y_1$, $y_2$, $y_3$, $y_4$ in the method and
\begin{align*}
    y_1 &= t_1^3, && t_1 = k_0, \\[2ex]
    y_2 &= t_2^3, && t_2 = k_0 + k_1, \\[2ex]
    y_3 &= t_3^3, && t_3 = k_0 + k_1 + k_2, \\[2ex]
    y_4 &= t_4^3, && t_4 = k_0 + k_1 + k_2 + k_3.
\end{align*}
Then the post-filtered $y_{4}$, call it $y_4^{3rd}$, is
\begin{align*}
    y_4^{3rd}&=y_4-\beta  \left(\kappa _3-\kappa _2\right) \\[2ex]
    &= \frac{2 \beta  k_2 }{k_1+k_2}y_1 - \beta  \left(\frac{2 k_3}{k_2+k_3}+2\right) y_2 \\[2ex] &- \beta  \left(-\frac{2 k_1}{k_1+k_2}-2\right) y_3+ \left(1-\frac{2 \beta  k_2}{k_2+k_3}\right) y_4.
\end{align*}
Denote $y_4$ as $y_4^*$ and denote $y_{4}^{3rd}$ as $y_4$:
\begin{align*}
    y_4 &= \frac{2 \beta  k_2 }{k_1+k_2}y_1 - \beta  \left(\frac{2 k_3}{k_2+k_3}+2\right) y_2 
    \\[2ex] &- \beta  \left(-\frac{2 k_1}{k_1+k_2}-2\right) y_3+ \left(1-\frac{2 \beta  k_2}{k_2+k_3}\right) y_4^*.
\end{align*}
We solve for $y_4^*$ and plug it into the solve step of the method along with the pre-filter step:
\begin{align*}
y_4^* &= \frac{2 \beta  k_2 }{\left(k_1+k_2\right) \left(\frac{2 \beta  k_2}{k_2+k_3}-1\right)}y_1-\frac{\beta  \left(\frac{2 k_3}{k_2+k_3}+2\right) }{\frac{2 \beta  k_2}{k_2+k_3}-1}y_2
\\[2ex]&-\frac{\beta  \left(-\frac{2 k_1}{k_1+k_2}-2\right) }{\frac{2 \beta  k_2}{k_2+k_3}-1}y_3-\frac{1}{\frac{2 \beta  k_2}{k_2+k_3}-1}y_{4},\\[2ex]
\Tilde{y}_3 &= y_3-\frac{k_3^2}{2 k_1 k_2} \left(\frac{2 k_2 }{k_1+k_2}y_1 -2 y_2 +\frac{2 k_1}{k_1+k_2} y_3\right). 
\end{align*}
The method with $y_1$, $y_2$, $y_3$, $y_4$ defined previously is
\begin{align*}
    &y_4^* - \Tilde{y}_3 = k_3 (y_4^*)'\\[2ex]
        \Longleftrightarrow
        &\frac{ 2 \beta  k_2}{\left(k_1+k_2\right) \left(\frac{2 \beta  k_2}{k_2+k_3}-1\right)} k_0^3 
        -\frac{ \beta  \left(\frac{2 k_3}{k_2+k_3}+2\right)}{\frac{2 \beta  k_2}{k_2+k_3}-1} \left(k_0+k_1\right){}^3
        \\[2ex]&-\frac{\beta  \left(-\frac{2 k_1}{k_1+k_2}-2\right)}{\frac{2 \beta  k_2}{k_2+k_3}-1} \left(k_0+k_1+k_2\right){}^3 
        -\frac{1}{\frac{2 \beta  k_2}{k_2+k_3}-1} \left(k_0+k_1+k_2+k_3\right){}^3 \\[2ex]
        &-\left(k_0+k_1+k_2\right){}^3\\[2ex]
        &+\frac{k_3^2}{2 k_1 k_2} \left(\frac{\left(2 k_2\right) k_0^3}{k_1+k_2}-2 \left(k_0+k_1\right){}^3+\frac{\left(2 k_1\right) \left(k_0+k_1+k_2\right){}^3}{k_1+k_2}\right) \\[2ex]
        &= k_3 \left( \frac{2 \beta  k_2 }{\left(k_1+k_2\right) \left(\frac{2 \beta  k_2}{k_2+k_3}-1\right)} 3k_0^2 -\frac{\beta  \left(\frac{2 k_3}{k_2+k_3}+2\right) }{\frac{2 \beta  k_2}{k_2+k_3}-1} 3(k_0 + k_1)^2 \right.
        \\[2ex]&\left. -\frac{\beta  \left(-\frac{2 k_1}{k_1+k_2}-2\right) }{\frac{2 \beta  k_2}{k_2+k_3}-1} 3(k_0 + k_1 + k_2)^2 -\frac{1}{\frac{2 \beta  k_2}{k_2+k_3}-1}3(k_0 + k_1 + k_2 + k_3)^2\right).
\end{align*}
Solve the equation above for $\beta$. Due to space considerations, define $\beta = \frac{\beta_1}{\beta_2}$. We find that
\begin{align*}
    \beta_1 &= -k_3^2 \left(k_2+k_3\right) \left(k_1+2 \left(k_2+k_3\right)\right),\\[2ex]
    \beta_2 &= 
    2 k_2 (2 \left(k_2+k_3\right) k_1^2+\left(k_2^2-5 k_3 k_2-7 k_3^2\right) k_1\\[2ex]
    &+3 k_0 \left(k_1-k_3\right) \left(k_2+k_3\right)-2 k_2 k_3 (k_2+k_3)).
\end{align*}
Thus, in general, we choose the adaptive post-filter to be $\beta_n = \frac{\beta_{n1}}{\beta_{n2}}$ where
\begin{align*}
    \beta_{n1} &= -k_n^2 \left(k_{n-1}+k_n\right) \left(k_{n-2}+2 \left(k_{n-1}+k_n\right)\right),
    \\[2ex]
    \beta_{n2} &= 2 k_{n-1} (2 \left(k_{n-1}+k_n\right) k_{n-2}^2+\left(k_{n-1}^2-5 k_n k_{n-1}-7 k_n^2\right) k_{n-2}\\[2ex]
    &+3 k_{n-3} \left(k_{n-2}-k_n\right) \left(k_{n-1}+k_n\right)-2 k_{n-1} k_n \left(k_{n-1}+k_n\right)).
\end{align*}
\end{proof}
The adaptive method is third order with this choice of $\beta_n$ and the given value of $\alpha_n$ in the previous subsection. The technique used in this proof is also used in the appendices of \cite{mcgovern:2022} for alternative derivations of the constant time step filters and the adaptive step pre-filter. Note that in the constant step case, $\beta_n$ reduces to match the constant step post-filter.
\begin{lemma}
The adaptive post-filter $\beta_n$ simplifies to $\frac{5}{11}$ when $k_{n-3} = k_{n-2} = k_{n-1} = k_{n} = k$.
\end{lemma}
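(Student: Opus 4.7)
The plan is a direct substitution followed by straightforward algebraic simplification; no structural insight is required beyond noticing one convenient cancellation in $\beta_{n2}$. I would start by substituting $k_{n-3}=k_{n-2}=k_{n-1}=k_n=k$ separately into $\beta_{n1}$ and $\beta_{n2}$ and then form the ratio.

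For the numerator, $\beta_{n1} = -k_n^2(k_{n-1}+k_n)(k_{n-2}+2(k_{n-1}+k_n))$ collapses to $-k^2 \cdot (2k) \cdot (k + 4k) = -10k^4$. This step is essentially immediate.

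For the denominator, $\beta_{n2}$ has four grouped terms inside the outer factor $2k_{n-1}$. The key observation is that the third term, $3k_{n-3}(k_{n-2}-k_n)(k_{n-1}+k_n)$, carries the factor $k_{n-2}-k_n$, which vanishes in the constant-step case; this is the only subtlety worth noting. The remaining three terms become $2(2k)k^2 = 4k^3$, $(k^2 - 5k^2 - 7k^2)k = -11k^3$, and $-2k\cdot k \cdot 2k = -4k^3$, which sum to $-11k^3$. Multiplying by $2k_{n-1} = 2k$ gives $\beta_{n2} = -22k^4$.

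Finally, $\beta_n = \beta_{n1}/\beta_{n2} = (-10k^4)/(-22k^4) = 5/11$, matching the constant-step post-filter coefficient from Section \ref{sec:intro}. I do not anticipate any obstacle; the only place one might slip is in the bookkeeping of the middle term of $\beta_{n2}$, where the coefficient $k_{n-1}^2 - 5k_nk_{n-1} - 7k_n^2$ must be evaluated carefully as $-11k^2$ before multiplying by $k_{n-2}=k$.
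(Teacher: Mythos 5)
Your proposal is correct and follows essentially the same route as the paper: direct substitution of $k_{n-3}=k_{n-2}=k_{n-1}=k_n=k$ into $\beta_{n1}$ and $\beta_{n2}$, yielding $-10k^4$ and $-22k^4$ respectively (with the third term of $\beta_{n2}$ vanishing because of the factor $k_{n-2}-k_n$), and hence $\beta_n = 5/11$. All the arithmetic checks out, so nothing further is needed.
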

\begin{proof} Suppose $k_{n-3} = k_{n-2} = k_{n-1} = k_{n} = k$. Then
\begin{align*}
    \beta_{n1} &= -k^2(k+k)(k+2(k+k)) = -10k^4,\\[2ex]
    \beta_{n2} &= 2k(2(k+k)k^2 + (k^2-5k\cdot k - 7k^2)k\\[2ex]
    &+ 3k(k-k)(k+k)-2k\cdot k (k +k)\\[2ex]
    &= 2k( 4k^3 -11k^3 + 0 -4k^3) = -22k^4,\,\,\,\text{and so}\\[2ex]
    \beta_n &= \frac{-10k^4}{-22k^4} = \frac{5}{11}.
\end{align*}
\end{proof}
This is consistent with the post-filter coefficient in Step 3) of the constant step method IE-Pre-Post-3 given in \Cref{sec:intro}.

\section{Algorithm}
\label{sec:alg}
We provide the pseudocode for a naive halving and doubling algorithm in \Cref{alg:halvingdoubling} listed below. The simplest controller was chosen because this paper focuses on the estimator. 

\begin{algorithm}
\caption{Halving and Doubling Adaptive Time Step}
\tiny
\label{alg:halvingdoubling}
\begin{algorithmic}
\STATE{Define TOL := Given user tolerance, dt := Given initial time step}
\STATE{Define a := Given lower time bound}
\STATE{Define b := Given upper time bound}
\STATE{Initialize t := Time array, y := Y value array, e := Error array}
\STATE{$k_n = dt$}
\STATE{RK3 for the first 3 steps (3 values are needed to start filtering)}
\WHILE{$t_n < b$}
\STATE{$k_{n-1} = t_n - t_{n-1}$}
\STATE{$k_{n-2} = t_{n-1} - t_{n-2}$}
\STATE{$k_{n-3} = t_{n-2} - t_{n-3}$}
\WHILE{Halving and Doubling}
\STATE{Compute pre-filter: $\alpha_n$, curvature $\kappa_{n-1}$}
\STATE{$\Tilde{y}_n = y_n - \frac{\alpha_n}{2}\kappa_{n-1}$}
\STATE{Implicit Euler Solve: $\frac{y_{n+1} - \Tilde{y}_n}{k_n} = f(t_{n+1}  y_{n+1})$}
\STATE{$y_{n+1}^{2nd} = y_{n+1}$}
\STATE{Compute post-filter: $\beta_n$, curvature $\kappa_{n}$}
\STATE{$y^{3rd}_{n+1} = y_{n+1} - \beta_n(\kappa_{n} - \kappa_{n-1}$)}
\STATE{Set Error Estimator: EST := $|y^{3rd}_{n+1} - y_{n+1}^{2nd}|$}

\IF{TOL $\cdot \,\,k_n < $ EST}
\STATE{$k_n = k_n / 2.0$}
\ELSIF{EST $<$ TOL $\cdot \,\,k_n / 2^5$ }
\STATE{$k_n = k_2 \cdot 2$}
\STATE{Break Inner Loop}
\ELSE
\STATE{Continue}
\ENDIF
\ENDWHILE
\STATE{Append t, y, e values}
\STATE{Continue Outer Loop}
\ENDWHILE
\RETURN $t, y, e$
\end{algorithmic}
\end{algorithm}

\section{Numerical results}
\label{sec:numerics}
In this section we numerically test the adaptive method. First, Filtered-IE23 is implemented to solve the model problem to compare with the results of the constant step method IE-Pre-Post-3. We then run the method to solve a quasi-periodic ODE, a stiff nonautonomous problem, and the van der Pol Oscillator.

\subsection{Model Problem}
To establish a numerical baseline, we solve the canonical test problem with Filtered-IE23. The problem statement is
\begin{align*}
    y' &= \lambda y,\\
    y(0) &= 1.
\end{align*}
For simplicity, set $\lambda = 1$. Then the solution to the initial value problem is $y=e^t$. We solve the ODE with the adaptive step method Filtered-IE23 and constant step method IE-Pre-Post-3 on the interval $0 \leq t \leq 2$. The results are in the table below.
\begin{table}[ht]
    \centering
    \tiny
    \begin{tabular}{|l|l|l|l|l|l|}
    \hline
        Adaptive Tolerance Setting & Filtered-IE23 Steps & Filtered-IE23 Error \\ \hline
        0.005 & 200   & 1.54956E-05 \\ \hline
        0.00025 & 2000  & 1.59584E-08 \\ \hline \hline
        & IE-Pre-Post-3 Steps & IE-Pre-Post-3 Error \\ \hline
        & 200 or $\Delta t$ = 0.01 & 1.55776E-05 \\ \hline
        & 2000 or $\Delta t$ = 0.001 & 1.59638E-08 \\ \hline
    \end{tabular}
    \caption{Filtered-IE23 versus IE-Pre-Post-3 on the Model Problem}
    \label{numerics:adaptive-constant-test}
\end{table}

In \Cref{numerics:adaptive-constant-test}, we see that Filtered-IE23 achieves a similar error at its final step to IE-Pre-Post-3 for the same number of steps. This is expected since they are both 3rd order methods and the problem is not particularly difficult. However, this gives evidence that the implementation is correct and is consistent with the method derivation. We will see more challenging problems in the following subsections.

For further reference, \Cref{numerics:IE-Pre-Post-3} and \Cref{numerics:IE-Pre-2} show the convergence of the constant step filtered methods run on the model problem. The successive error ratios are computed while doubling the number of steps (i.e. halving the time step). The base two $\log$ of the error ratios gives an estimate of the order of the method. We see the order estimate of IE-Pre-Post-3 approach 3 and the order estimate of IE-Pre-2 approach 2, as expected.

\begin{table}[ht]
    \tiny
    \centering
    \begin{tabular}{|l|l|l|l|}
    \hline
        Steps & Error & Error Ratio & Order  \\ \hline
        40   & 1.74388E-03 & 7.46631 & 2.90040 \\ \hline
        80   & 2.33566E-04 & 7.72961 & 2.95040 \\ \hline
        160  & 3.02170E-05 & 7.86411 & 2.97528 \\ \hline
        320  & 3.84240E-06 & 7.93191 & 2.98767 \\ \hline
        640  & 4.84422E-07 & 7.96608 & 2.99387 \\ \hline
        1280 & 6.08106E-08 & 7.98530 & 2.99735 \\ \hline
        2560 & 7.61532E-09 & 8.00833 & 3.00150 \\ \hline
    \end{tabular}
    \caption{IE-Pre-Post-3 Convergence Test}
    \label{numerics:IE-Pre-Post-3}
\end{table}
\begin{table}[ht]
    \centering
    \tiny
    \begin{tabular}{|l|l|l|l|}
    \hline
        Steps & Error & Error Ratio & Order  \\ \hline
        40    & 5.08667E-02 & 3.88217 & 1.95686 \\ \hline
        80   & 1.31026E-02 & 3.93307 & 1.97566 \\ \hline
        160  & 3.33140E-03 & 3.96436 & 1.98709 \\ \hline
        320  & 8.40338E-04 & 3.98162 & 1.99335 \\ \hline
        640  & 2.11054E-04 & 3.99066 & 1.99663 \\ \hline
        1280 & 5.28871E-05 & 3.99529 & 1.99830 \\ \hline
        2560 & 1.32373E-05 & 3.99764 & 1.99915 \\ \hline
    \end{tabular}
    \caption{IE-Pre-2 Convergence Test}
    \label{numerics:IE-Pre-2}
\end{table}

\subsection{Quasi-Periodic Oscillation ODE}
Next, we test Filtered-IE23 and the constant step methods on an ODE that has quasi-periodic oscillations. It is stated as
\begin{align*}
    &x'''' + (\pi^2 + 1)x'' + \pi^2 x = 0,\\[2ex]
    &x(0)=2,\,x'(0)=0,\,x''(0)=-(1+\pi^2),\,x'''(0)=0.
\end{align*}
The exact solution is $x(t) = \cos t + \cos \pi t$. This solution is the sum of periodic functions with incommensurable periods which makes it quasi-periodic \cite{corduneanu:1989}. The solution plot is presented in \Cref{fig:quasi-periodic}.

\begin{figure}[ht]
  \centering
  \includegraphics[width=0.45\textwidth]{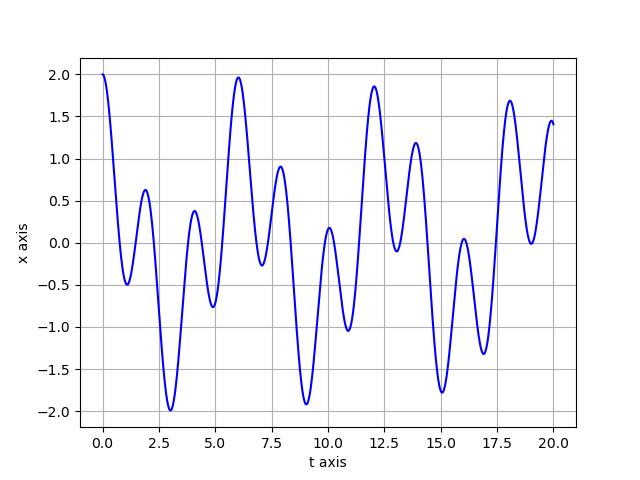}
  \caption{Quasi-Periodic Oscillation ODE Solution}
  \label{fig:quasi-periodic}
\end{figure}

We run the constant step methods, IE-Pre-2 and IE-Pre-Post-3, with step sizes $\Delta t = 0.1, 0.05, 0.01$. We set the tolerance to $0.0075$ for Filtered-IE23. The results are presented in \Cref{fig:quasiperiodic-plots-step-01} and \Cref{fig:quasiperiodic-plots-step-02}. Filtered-IE23 is able to capture the solution with its adaptive time stepping and the constant step filtered methods approach the true solution as we decrease the step size, as expected. Errors between the adaptive step method Filtered-IE23 and the higher order constant step method IE-Pre-Post-3 are presented in \Cref{numerics:adaptive-constant-quasi}. The errors are comparable between the adaptive and constant step method around 2000 steps. This suggests that the error estimator is a reliable way to have the adaptive method home in on the solution without the need to try many different constant step settings when running a computational tool based on this method.

\begin{table}[ht]
    \centering
    \tiny
    \begin{tabular}{|l|l|l|l|l|l|}
    \hline
        Adaptive Tolerance Setting & Filtered-IE23 Steps & Filtered-IE23 Error \\ \hline
        0.0075 & 2000   & 2.11559E-03 \\ \hline \hline
        & IE-Pre-Post-3 Steps & IE-Pre-Post-3 Error \\ \hline
        & 200 or $\Delta t$ = 0.1 & 1.98829E+00 \\ \hline
        & 400 or $\Delta t$ = 0.05 & 2.86552E-01 \\ \hline
        & 2000 or $\Delta t$ = 0.01 & 2.11669E-03 \\ \hline
    \end{tabular}
    \caption{Filtered-IE23 versus IE-Pre-Post-3 on the Quasi-Periodic ODE}
    \label{numerics:adaptive-constant-quasi}
\end{table}

\begin{figure}[ht]
     \centering
     \includegraphics[width=0.45\textwidth]{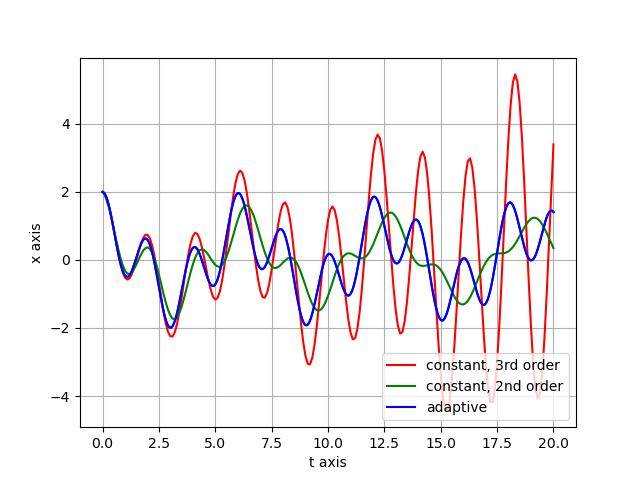}
     \caption{Constant Step Methods set to $\Delta t = 0.1$}
      \label{fig:quasiperiodic-plots-step-01}
\end{figure}
 \begin{figure}[ht]
     \centering
     \includegraphics[width=0.45\textwidth]{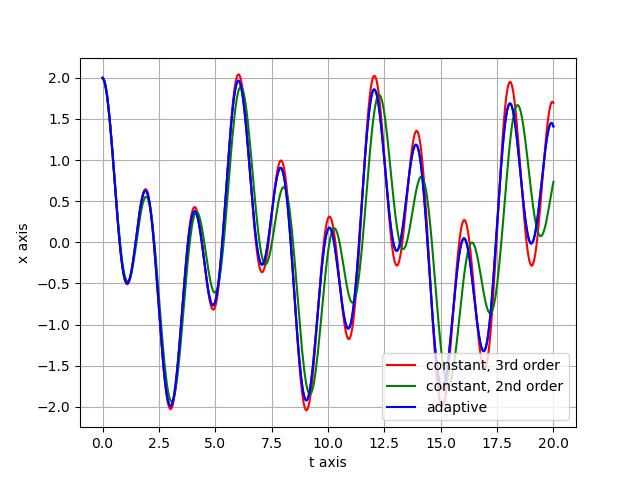}
     \caption{Constant Step Methods set to $\Delta t = 0.05$}
     \label{fig:quasiperiodic-plots-step-02}
\end{figure}

\subsection{Model Problem Analog}
Now we consider $y'=\lambda(t) y$. If we define $\lambda$ to be a function of $t$, then we have a nonautonomous analog of the model problem. It is useful to test the nonautonomous stability of a method on problems of this nature. We use
\begin{align*}
    y' &= (\gamma - 2t) y,\\
    y(0) &= 1.
\end{align*}
The solution is $y = e^{\gamma t - t^2}$. Increasing the parameter $\gamma$ increases the stiffness of the problem. We choose $\gamma = 1, 3, 5, 5.7, 6$. The results are compiled in \Cref{numerics:model-problem-analog}. Plots for $\gamma=5$ and $\gamma=6$ are in \Cref{fig:model-analog-plot-gamma-5} and \Cref{fig:model-analog-plot-gamma-6}
\begin{table}[ht]
    \centering
    \tiny
    \begin{tabular}{|l|l|l|l|}
    \hline
        $\gamma$ Value & Tolerance Setting & Initial Step Setting & Filtered-IE23 Error \\ \hline
        1.0 & 2.5e-05 & 1e-05  & 1.26305E-06 \\ \hline
        3.0 & 2.5e-05 & 1e-05  & 2.34021E-07  \\ \hline
        5.0 & 0.00025 & 0.0001  & 3.49478E-06 \\ \hline
        5.7 & 0.00025 & 0.0001  & 2.43668E-06 \\ \hline
        6.0 & 0.0005 & 0.0001 & 3.34943E-06 \\ \hline
    \end{tabular}
    \caption{Filtered-IE23 on the Model Problem Analog}
    \label{numerics:model-problem-analog}
\end{table}
\begin{figure}[ht]
     \centering
     \includegraphics[width=0.45\textwidth]{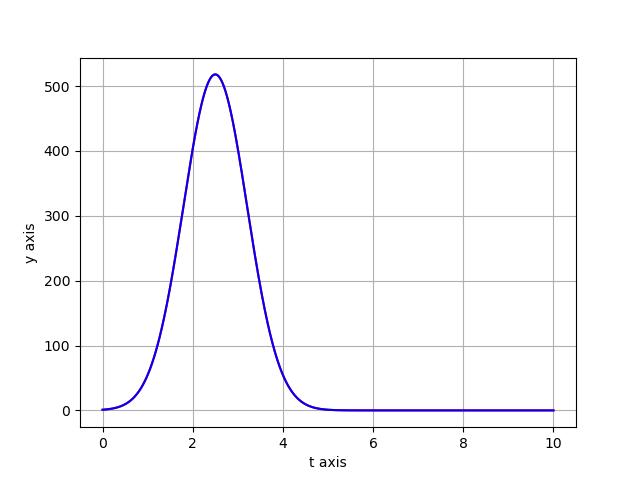}
     \caption{Model ODE Analog Solution $\gamma=5$}
     \label{fig:model-analog-plot-gamma-5}
\end{figure}
\begin{figure}[ht]
     \centering
     \includegraphics[width=0.45\textwidth]{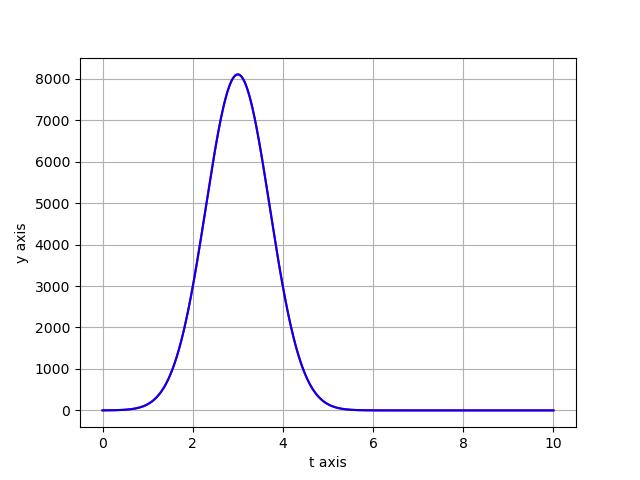}
     \caption{Model ODE Analog Solution $\gamma=6$}
     \label{fig:model-analog-plot-gamma-6}
\end{figure}

The results of this test suggest that the adaptive step method Filtered-IE23 may have favorable stability properties for nonautonomous problems. Additionally, the adaptive method can handle various levels of stiffness as evidenced by the increasing of the parameter $\gamma$.

\subsection{van der Pol Oscillator}
Here we test Filtered-IE23 on the van der Pol Oscillator, presented as
\begin{align*}
    x'' - \mu ( 1 - x^2 ) x' + x = 0.
\end{align*}
The van der Pol Oscillator satisfies Lienard's Theorem which ensures there is a stable limit cycle \cite{kanamaru:2022}. The greater the parameter $\mu$, the greater the damping effect. We choose $\mu = 1, 2, 5, 10, 100, 200$. The plots for $\mu=100$ and $\mu=200$ are presented in \Cref{fig:vanderpol-plots-100} and \Cref{fig:vanderpol-plots-200}.

\begin{figure}[ht]
     \centering
     \includegraphics[width=0.45\textwidth]{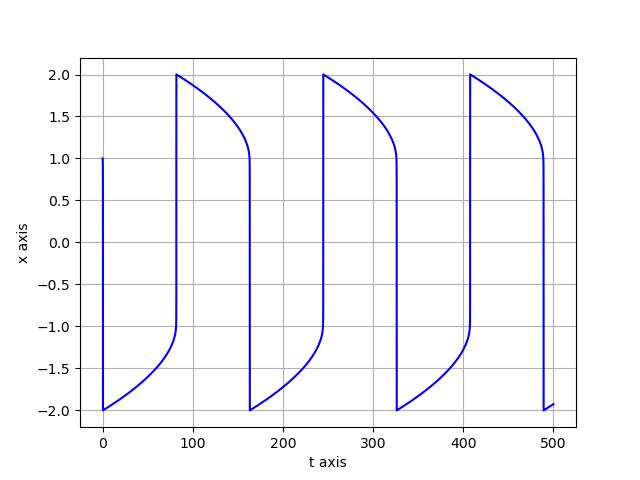}
     \caption{Filtered-IE23 Plots Solving van der Pol Oscillator $\mu=100$}
     \label{fig:vanderpol-plots-100}
\end{figure}
\begin{figure}[ht]
     \centering
     \includegraphics[width=0.45\textwidth]{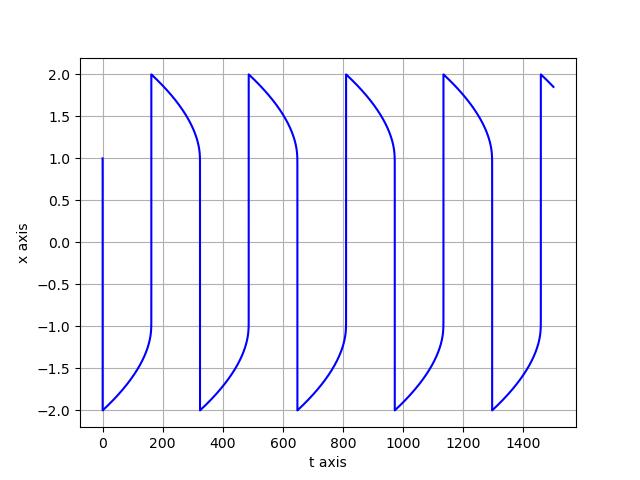}
     \caption{Filtered-IE23 Plots Solving van der Pol Oscillator $\mu=200$}
     \label{fig:vanderpol-plots-200}
\end{figure}

Larger values of $\mu$ cause difficulty for constant step methods except for relatively small step sizes.  Evidence of this can be seen in the sharpness of the transition at the local minima and maxima in the graphs of $\mu=100$ and $\mu = 200$. The adapative method Filtered-IE23 is more accurate at less cost than the constant step methods, which struggle navigating the sharp corners.

As we do not have an analytic solution, we use scipy's solve\_ivp set to RK45 to compare to Filtered-IE23's solution. The plots for the adaptive method Filtered-IE23 and RK45 nearly overlap. The differences in the $x$ values at the last step between the two methods are captured in \Cref{numerics:vanderpol-ie23-rk45}. The results of the test show that the adaptive step method Filtered-IE23 manages the sharp transitions well and maintains accuracy.
\begin{table}[ht]
    \centering
    \tiny
    \begin{tabular}{|l|l|l|l|l|}
    \hline
        Parameter $\mu$ & Final t value & Final x value FIE23 & Final x value RK45  & Method Difference \\ \hline
        1.0 & 50.0 & -1.61024 &-1.57738 & 3.28643E-02 \\ \hline
        2.0 & 50.0 & -1.08192 &-.980039 & 1.01883E-01 \\ \hline
        5.0 & 100.0 & 1.95843 &1.97366 & 1.52230E-02 \\ \hline
        10.0 & 200.0 & -1.18267 &-1.03524 & 1.47425E-01 \\ \hline
        100.0 & 500.0 & -1.92649 &-1.91881 & 7.67690E-03 \\ \hline
        200.0 & 1500.0 & 1.85147 &1.85743 & 5.96748E-03 \\ \hline
    \end{tabular}
    \caption{Filtered-IE23 vs RK45 on van der Pol}
    \label{numerics:vanderpol-ie23-rk45}
\end{table}

\section{Conclusions}
\label{sec:conclusions}

The constant step methods IE-Pre-2 and IE-Pre-Post-3 from \cite{decaria:2022} are evidence that filters can increase the order of Implicit Euler while still maintaining favorable stability properties. The work presented herein shows that the natural embedded pair within these methods can be used for effective adaptive stepping. We have found the pre-filter and post-filter to construct the adaptive extension of IE-Pre-2 and IE-Pre-Post-3, and performed initial testing. Despite the choice of the simplest controller, naive halving and doubling, the adaptive step method Filtered-IE23 performed well on the numerical tests run thus far. In situations where the constant step methods and the adaptive step extension perform similarly, the error estimator can be relied on to guide the adaptive method to the solution. Additionally, Filtered-IE23 has shown that it can navigate a problem where the constant step methods struggle. This is promising for more finely tuned implementations of the method.

The problem of implementing higher accuracy methods in legacy codes leads to interesting questions in numerical analysis. One approach is to add time filters, as herein, but others would be welcome. Next steps for research on Filtered-IE23 are fine tuning the implementation, further numerical testing, and analysis on the open question of stability.

\section{Data Availability Statement}
Source code for a Python implementation is available at
\url{https://github.com/stevemcgov/filtered-ie23} under a standard MIT License. Test scripts to reproduce the results are included in the code repository. Consult the repository README for setup instructions.

\section{Competing Interest Declaration}
There is no competing interest.

\section{Ethical Statement}
There is no conflict of interest.

\section{Funding Declaration}
There was no funding.

\section{Author Contribution Declaration}
There is only one author.

\bibliographystyle{siamplain}
\bibliography{references}
\end{document}